\documentclass[12pt]{article}  
\usepackage{amsfonts,latexsym} 
\usepackage{amsthm}
\usepackage{amssymb, amsmath}

\usepackage{graphicx,color}
\usepackage{listings}
\usepackage{algorithm}
\usepackage{algorithmic}
\usepackage{cancel} 
\usepackage{wrapfig}

\newtheorem{theorem}{Theorem}[section]
\newtheorem{proposition}[theorem]{Proposition}
\newtheorem{lemma}[theorem]{Lemma}

\theoremstyle{definition}
\newtheorem{defn}{Definition}[section] 
\newtheorem{example}{Example}[section]


\theoremstyle{remark}
\newtheorem*{remark}{Remark}

\title{On the intersection of subgroups in free groups:
echelon subgroups are inert}

\author{Amnon Rosenmann \\ AIT Austrian Institute of Technology}
\date{}         

\begin{document}


\newcommand{\CC}{\mathbb C} 


\newcommand{\beas}{\begin{eqnarray*}} 
\newcommand{\eeas}{\end{eqnarray*}} 

\newcommand{\bm}[1]{{\mbox{\boldmath $#1$}}} 

\newcommand{\bc}[2]{\genfrac{(}{)}{0pt}{}{#1}{#2}}

\newcommand{\tworow}[2]{\genfrac{}{}{0pt}{}{#1}{#2}}

\maketitle

\begin{abstract}
A subgroup $H$ of a free group $F$ is called inert in $F$ if 
$\mathrm{rk}(H \cap G) \leq \mathrm{rk} (G)$ for every $G < F$.
In this paper we expand the known families of inert subgroups.
We show that the inertia property holds for 1-generator endomorphisms.
Equivalently, echelon subgroups in free groups are inert.
An echelon subgroup is defined through a set of generators that are in echelon form with respect to some ordered basis of the free group, and may be seen as a generalization of a free factor.
For example, the fixed subgroups of automorphisms of finitely generated free groups are echelon subgroups.
The proofs follow mostly a graph-theoretic or combinatorial approach.
\end{abstract}




\section{Introduction}
\label{sec1}
The rank of the intersection of finitely generated subgroups of free groups is one of the topics of interest
in combinatorial group theory since Howson~\cite{H54} showed that when two subgroups are finitely generated then
so is their intersection. 
The famous Hanna Neumann Conjecture (Neumann~\cite{HN56}, \cite{HN57}) states that if $G_1$ is of rank $r_1>0$ and $G_2$ is of rank $r_2>0$ then the rank of $G_1 \cap G_2$ is at most $1 + (r_1-1)(r_2 - 1)$.
Tardos~\cite{T92} proved the conjecture for $r_1 \leq 2$.
Dicks and Formanek~\cite{DF01} improved it to $r_1 \leq 3$.
The conjecture was recently proved by Mineyev~\cite{M12} (see also Dicks~\cite{D11} for a simplified proof) and independently by Friedman~\cite{FD14} (including a simplified proof by Dicks). 

In this paper we study special types of subgroups of free groups and show that their involvement in
the intersection leads to a bound which is sharper than the general bound stated in the Hanna Neumann conjecture.
Dicks and Ventura~\cite{DV96} introduced the notion of inertia: a subgroup of a free group is called \emph{inert} if its intersection with any subgroup $G$ is of rank which is bounded by the rank of $G$.
Note that the inertia property is transitive: given a free group $F$ with subgroups $H < G < F$ then if $G$ is inert in $F$ and $H$ is inert in $G$ then $H$ is inert in $F$.

By Tardos~\cite{T92} every subgroup of rank 2 in a free group is inert: this is exactly the Hanna Neumann Conjecture when referring to subgroups of rank 2.
More natural examples of inert subgroups of free groups are free factors.
Dicks and Ventura~\cite{DV96} proved that the subgroup of a free group which is fixed by a family of injective endomorphisms of the free group is inert.
As for a family of general endomorphisms, not necessarily injective ones, it is still an open problem whether the fixed subgroup is inert.
Bergman~\cite{B99} showed that the rank of the fixed subgroup in this case is at most the rank of the free group, and his result was improved by Martino and Ventura~\cite{MV04a} to show that the fixed subgroup is compressed (a compressed subgroup is one which cannot have rank greater than the rank of a subgroup containing it; in particular, inert subgroups are compressed).

In Section~\ref{sec3} we introduce the notion of an echelon subgroup of a free group, which is defined through a set of generators that are in echelon form with respect to some ordered basis of the free group.
The class of echelon subgroups includes the class of free factors.
We show that every echelon subgroup $H$ of a free group $F$ is inert (Theorem~\ref{th2}) 

Echelon subgroups can be constructed in an iterative process through simple 1-generator subgroup endomorphisms.
Such endomorphisms fix all but (possibly) one of the elements of a free basis of the subgroup.
In Section~\ref{sec2} we define these endomorphisms and show in Theorem~\ref{th1} (which is, in fact equivalent to Theorem~\ref{th2}) that the image of a 1-generator endomorphism of a free group $F$ is inert in $F$.
The iterative process of 1-generator subgroup endomorphisms may also be used to construct non-echelon subgroups, which are still inert, as demonstrated by the subgroup of rank 3 presented in Example~\ref{ex1}.

Section~\ref{sec4} deals with the class of fixed subgroups of automorphisms of finitely-generated free groups.
These subgroups are inert as proved by Dicks and Ventura~\cite{DV96}.
Based on a structure theorem given by Martino and Ventura~\cite{MV04b} it is clear that the fixed subgroups form a special type of echelon subgroups, and since echelon subgroups are inert we have here another proof of the inertia property of the fixed subgroups of automorphisms of free groups.

We conclude in Section~\ref{sec5} with some open problems.

The proofs take mostly the graph-theoretic approach as done by Imrich~\cite{I77} and others to follow. 
\section{1-generator endomorphisms}
\label{sec2}
We start with some notation and definitions.
Let $F_n$ be the free group of rank $n$.
When $\{x_1, \ldots , x_n \}$ is a set of free generators for $F_n$
then we write it as $F_n = \langle x_1, \ldots , x_n \rangle$.
The same notation applies to subgroups of $F_n$.

When $H = \langle y_1, \ldots , y_m \rangle$ and $K < H < F_n$ then we denote by $\Gamma_H(K)$ the \emph{Schreier coset graph} of $K$ in $H$ with respect to the basis $\{y_1, \ldots, y_m \}$.
The root of this graph represents the coset $K1$ (1 being the trivial element), and for each vertex $v \in V(\Gamma_H(K)) = K \backslash H$ there are $2m$ directed edges labelled $y_1^{\pm 1}, \ldots, y_m^{\pm  1}$ going out of $v$.
For each edge $e$ labelled $y_j$ the vertex $v = \iota(e)$ is the initial vertex of $e$, and the vertex
$w = \tau(e) = vy_j$ is the terminal vertex of $e$.
In the other direction this edge is denoted $\bar{e}$ and labelled $y_j^{-1}$ with $w = \iota(e)$ and $v = \tau(e)$.
If $\gamma$ is a path starting at the root of $\Gamma_H(K)$ then the word 
$w = y_{i_1}^{\pm 1} \cdots y_{i_s}^{\pm  1}$ that is read off along the path represents
an element of $K$ if and only if $\gamma$ is a closed path (cycle) that terminates at the root.
In general, if two right cosets $Kh$ and $Kh'$ are equal then the two paths in $\Gamma_H(K)$ that start at the root with edge labels that form the two words $h$ and $h'$ end at the same vertex of $\Gamma_H(K)$.

The \emph{core} of the graph $\Gamma_{H}(K)$ is the minimal connected subgraph containing all non-trivial reduced (without cancellation) cycles (the infinite hanging trees are chopped) (see Stallings~\cite{S83} for more details).

As is known, $\mathrm{rk}(K) = b_1(\Gamma_H(K))$, where $b_1$ represents the \emph{first Betti number}, or the cyclomatic number (number of cycles), of $\Gamma_H(K)$.
By the very definition of the core, it is clear that $b_1$ may be confined to the core of the graph.
Note that $\mathrm{rk}(K) < \infty$ if and only if the core of $\Gamma_{H}(K)$ is finite.

Throughout the paper we will be dealing with basic simple endomorphisms that we call 1-generator subgroup endomorphisms.
\begin{defn}[1-generator subgroup endomorphism]
An endomorphism $\phi : H \to H$ of a subgroup $H < F_n$ that fixes all but (possibly) one of the members of a set of free generators of $H$ is a \emph{1-generator subgroup endomorphism}.
\end{defn}
\begin{defn}[Inert endomorphism]
An endomorphism $\phi : H \to H$ is \emph{inert} if its image $H \phi$ is inert in $H$.
\end{defn}
\begin{theorem}
A 1-generator endomorphism of a free group is inert.
\label{th1}
\end{theorem}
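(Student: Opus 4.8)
The plan is to reduce the statement to a concrete graph-theoretic inequality about Schreier coset graphs and then prove that inequality by a pushforward/covering argument. Let $\phi : F \to F$ be a 1-generator endomorphism, so there is a free basis $\{x_1, \ldots, x_n\}$ of $F$ with $x_i \phi = x_i$ for $i < n$ and $x_n \phi = w$ for some fixed word $w = w(x_1, \ldots, x_n)$; write $H = F\phi = \langle x_1, \ldots, x_{n-1}, w \rangle$. We must show $\mathrm{rk}(H \cap G) \le \mathrm{rk}(G)$ for every $G < F$. By the remarks in Section~\ref{sec2}, $\mathrm{rk}(G) = b_1(\Gamma_F(G))$ (confined to the core), so it suffices to exhibit, for the core graphs involved, a map that does not increase the first Betti number. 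The natural object is the Schreier graph $\Gamma_F(G)$ with its edge-labelling by $x_1^{\pm 1}, \ldots, x_n^{\pm 1}$; I would first replace each edge labelled $x_n$ by a path spelling $w$, obtaining a new graph in which the only closed paths that matter are those readable in $H$, and then fold (in the sense of Stallings) to recover $\Gamma_H(H \cap G)$ up to taking cores. Folding only identifies vertices and edges, hence never increases $b_1$, which is exactly what is needed once one checks that the intermediate graph genuinely computes $H \cap G$.

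More precisely, the first key step is the standard dictionary: the core of $\Gamma_H(H \cap G)$ is obtained by taking the fibre-product (pullback) of the core of $\Gamma_F(G)$ with the core of $\Gamma_F(H)$ over the bouquet on $\{x_1, \ldots, x_n\}$ and extracting its core; this realizes $\mathrm{rk}(H\cap G)$ as $b_1$ of a subgraph of a pullback. The second step is to use the special echelon shape of $H$: because $H$ is generated by $x_1, \ldots, x_{n-1}$ together with a single word $w$ whose $x_n$-exponents can be assumed (after a bounded modification, or by an innermost/outermost occurrence argument) to behave like a single ``new'' generator, the graph $\Gamma_F(H)$ has a particularly simple structure — essentially the bouquet on $x_1, \ldots, x_{n-1}$ with one extra loop subdivided to read $w$. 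The third step is the counting: I would set up a bijective or injective correspondence between independent cycles of the pullback's core and independent cycles of $\Gamma_F(G)$, using that the $x_1, \ldots, x_{n-1}$-part of the pullback projects isomorphically onto the corresponding part of $\Gamma_F(G)$, and that each $x_n$-edge of $\Gamma_F(G)$ contributes at most its own share of cycles via the inserted $w$-path. Summing local Euler-characteristic contributions over vertices and edges should yield $b_1(\text{pullback core}) \le b_1(\Gamma_F(G)\text{-core})$.

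The main obstacle is controlling what happens at the subdivided $w$-edges after folding: when the word $w$ shares prefixes or suffixes with the loops already present (or with other copies of $w$), folding can merge the inserted paths in ways that a priori could create extra cycles, and one must argue that any cycle so created ``uses up'' a cycle that was already present in $\Gamma_F(G)$. I expect to handle this by an induction on the length of $w$, or on the number of occurrences of $x_n^{\pm 1}$, peeling off one letter of $w$ at a time; each elementary step is a 1-generator endomorphism of a smaller or comparable complexity, and the transitivity of inertia (noted in the introduction) lets the inductive hypothesis do the bookkeeping. The base case $w = x_n$ (or $w$ a single letter) is the trivial identity-on-$F$ situation, and the case where $w$ does not involve $x_n$ at all exhibits $H$ as a free factor of $F$, which is already known to be inert. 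The delicate part throughout is ensuring the folding map is surjective onto the relevant core so that the Betti-number inequality transfers in the correct direction; I would make this precise by tracking, for each reduced cycle in the target, a preimage reduced cycle in the source, and verifying that a maximal independent family pulls back to an independent family.
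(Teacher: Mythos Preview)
Your proposal has a concrete error and, beneath it, a genuine gap.

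The error is in your first construction. Replacing each $x_n$-edge of $\Gamma_F(G)$ by a path spelling $w$ and then folding produces the Stallings graph of $G\phi$, not of $H\cap G$: every closed path at the basepoint of the new graph reads a word $g\phi$ with $g\in G$, because the internal vertices of the inserted $w$-paths have degree~$2$ and so cannot be used to splice paths together. The inequality $\mathrm{rk}(G\phi)\le\mathrm{rk}(G)$ is then just the trivial fact that a homomorphic image has no larger rank, and it says nothing about $\mathrm{rk}(H\cap G)$. The map you want goes the \emph{other} way: one should locate $w$-paths inside $\Gamma_F(G)$, treat them as new edges, and discard the $x_n$-edges---or, equivalently, map the core of $\Gamma_H(H\cap G)$ (over the basis $x_1,\dots,x_{n-1},w$ of $H$) into the core of $\Gamma_F(G)$.

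The deeper gap is that your inductive scheme does not get off the ground. ``Peeling off one letter of $w$'' and invoking transitivity requires a factorisation $H < H' < F$ with $H'$ the image of a strictly simpler $1$-generator endomorphism of $F$ and $H$ the image of a strictly simpler $1$-generator endomorphism of $H'$. But if $w'$ is any word with $\langle x_1,\dots,x_{n-1},w\rangle \le \langle x_1,\dots,x_{n-1},w'\rangle$, then typically either $H'=F$ (no reduction) or $H'=H$ (again no reduction); there is no evident intermediate step that lowers a complexity measure. Consequently the ``should yield'' in your Euler-characteristic paragraph is where the whole proof lives, and you have not supplied it.

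For comparison, the paper's argument works directly with the map $\alpha:C(K)\to C(G)$ in the reverse direction. After normalising so that $w$ begins and ends with $x_n^{\pm1}$, it splits into two cases. When $w$ ends with $x_n$, there is an \emph{injective} map $\tilde\alpha$ on vertices and on outgoing edges (send an $x$-edge to its first $x_n$-edge, and an $x^{-1}$-edge to its first $x_n^{-1}$-edge), which immediately gives the degree inequality and hence $b_1(C(K))\le b_1(C(G))$. When $w$ ends with $x_n^{-1}$ this injectivity fails, and the paper instead runs an iterative argument that removes $x_n$-edges and adds $x$-edges one at a time while tracking the cyclomatic number. The case split and the first-edge trick are the ideas your proposal is missing.
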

\begin{proof}
It suffices to confine ourselves to finitely generated free groups.
So let $F_n = \langle x_1, \ldots , x_n \rangle$ be a free group of rank $n$.
Let $\phi : F_n \to F_n$ be a 1-generator endomorphism defined by $x_i \phi = x_i$, $i = 1, \ldots, n-1$, $x_n \phi = x \in F_n$ and let $H = F_n \phi$.
Let $G < F_n$ be some finitely generated subgroup of $F_n$ with $\Gamma_{F_n}(G)$ the Schreier coset graph of $G$ in $F_n$ with respect to the basis $\{x_1, \ldots, x_n \}$.
Let $K = H \cap G < H$ and assume that $K$ is non-trivial (otherwise the proof is trivial).

If $x \in F_{n-1}$ then $\phi$ is a retraction with $H = F_{n-1}$ and $K$ a free factor of $G$.
Hence $\mathrm{rk}(K) \leq \mathrm{rk} (G)$.

So suppose that $x \notin F_{n-1}$.
Thus $\{x_1, \ldots, x_{n-1}, x\}$ is a free basis of $H$.
By applying Nielsen transformations, if necessary, we may assume that the reduced word representing $x$ starts and ends with $x_n^{\pm 1}$.
Furthermore, replacing $x_n$ by $x_n^{-1}$ in the initial basis, if necessary, we may assume that $x$ starts with $x_n$.

Let $\Gamma_{H}(K)$ be the Schreier coset graph of the right cosets of $K$ in $H$ with respect to the generators
$ x_1, \ldots , x_{n-1}, x$.
There is a natural injective $H$-map $i$ from the quotient set $K \backslash H$
to the quotient set $G \backslash F$, $Kh \mapsto Gh$, which induces the following continuous mapping $\alpha : \Gamma_{H}(K) \to \Gamma_{F_n}(G)$.
On $V(\Gamma_{H}(K))$, the set of vertices of $\Gamma_{H}(K)$, $\alpha$ is the natural embedding $i$. 
Similarly for the edges in $E(\Gamma_{H}(K))$ with labels
$y \in \{ x_1^{\pm 1}, \ldots, x_{n-1}^{\pm 1} \}$.
Then an edge $e$ with label $x^{\pm 1}$ and with initial vertex $v = \iota(e)$ and terminal
vertex $w = \tau(e)$ is mapped to the concatenation of edges along the path that leads from $\alpha(v)$ to $\alpha(w)$, where the labels of the edges that are read off along this path form the reduced word $x^{\pm 1}$.

We will handle separately the case where $x$ starts and ends with $x_n$ and the one where $x$ starts with $x_n$ and ends with $x_n^{-1}$.
Let us begin with $x$ starting and ending with $x_n$.
Denote by $C(G)$ the core of $\Gamma_{F_n}(G)$ and by $C(K)$ the core of $\Gamma_{H}(K)$.
The mapping $\alpha : \Gamma_{H}(K) \to \Gamma_{F_n}(G)$ restricts to a mapping
$\alpha : C(K) \to C(G)$.
Indeed, any reduced path in $\Gamma_{H}(K)$ is mapped by $\alpha$ to a reduced path in
$\Gamma_{F_n}(G)$ because no cancellation occurs in
$x^2$.
In particular, reduced cycles of $C(K)$ are mapped to reduced cycles of $C(G)$.

The mapping $\alpha : C(K) \to C(G)$ induces a (discontinuous in general) injective mapping 
$\tilde{\alpha} : C(K) \to C(G)$.
The difference between $\alpha$ and $\tilde{\alpha}$ is that an $x$-edge $e$ is mapped to the $x_n$-edge which is the first edge in the path $\alpha(e)$, and similarly, an $x^{-1}$-edge $e'$ is mapped to the $x_n^{-1}$-edge which is the first edge in the path $\alpha(e')$.
Thus, $\tilde{\alpha}$ is an injective mapping on both the vertices and the (directed) edges of $C(K)$.
Each vertex $v \in V(C(K))$ is mapped to a unique vertex $\tilde{\alpha}(v) \in C(G)$, and each edge
$e \in E(C(K))$ with $\iota(e) = v$ is mapped to a unique edge $\tilde{\alpha}(e)$ with
$\iota(\tilde{\alpha}(e)) = \tilde{\alpha}(v)$.
This implies that the degree of each vertex $v \in C(K)$ is at most the degree of the corresponding vertex
$\tilde{\alpha}(v) \in C(G)$. 
By the formula for the first Betti number of the core $C(G)$ (which equals the rank of $G$)
\begin{equation}
\mathrm{rk} (G) = b_1(C(G)) = 1 + \frac{\sum_{v \in C(G)}(\mathrm{deg}(v) - 2)}{2}
\label{frml1}
\end{equation}
and similarly for $\mathrm{rk} (K) = b_1(C(K))$, we get that $\mathrm{rk}(K) \leq \mathrm{rk} (G)$.

It remains to handle the case where $x$ starts with $x_n$ and ends with $x_n^{-1}$.
That is, $x$ is of reduced form $x_n p q p^{-1} x_n^{-1}$, where $p$ may be trivial and $q$ is non-trivial and cyclically reduced.
The coset graphs, cores and mapping $\alpha : \Gamma_{H}(K) \to \Gamma_{F_n}(G)$ are as before.
But now the set of vertices of $C(K)$ does not necessarily embed in the set of vertices of $C(G)$.
This is because an $x$-path in $\Gamma_{F_n}(G)$ followed by another $x$-path has to backtrack, hence it may go
beyond the boundary of $C(G)$ in the first $x$-path before returning to $C(G)$ in the second $x$-path.
So we extend $C(G)$ to a graph $\bar{C}(G)$ which contains these extra vertices and edges that we call "hairs" (each such hair has edges with labels that create a word which is a suffix of $p^{-1} x_n^{-1}$ when read outward).

The proof in the previous case was based on counting the degrees of vertices.
But now both $x$ and $x^{-1}$ start with an $x_n$-edge, and one needs to get into a more detailed and meticulous examination of the structure of $x$ when using degree counting as a method for the proof.
Instead, we are counting cycles, which turns out to be a simpler mission.
The idea is to start with the extended core $\bar{C}(G)$ and at each step remove an $x_n$-edge and add an $x$-edge while preserving the initial cyclomatic number of $C(G)$ as a bound for the cyclomatic number of the evolving graph.
After finitely-many steps we reach a graph in which $C(K)$ is embedded, thus having a cyclomatic number which is greater or equal to the cyclomatic number of $C(K)$ and at the same time being less or equal to the cyclomatic number of $C(G)$. 

So, suppose first that $v$ is a vertex of $C(K)$ whose image $\alpha(v)$ is in $C(G)$.
Suppose also that $C(K)$ contains an $x$-edge going out of $v$ and ending at $w$ and that $\alpha(w) \in C(G)$.
Then we remove the $x_n$-edge going out of $\alpha(v) \in C(G)$ and add an $x$-edge starting at $\alpha(v)$ and ending at $\alpha(w)$.
If the $x_n$-edge lies on a simple cycle then by removing it the graph remains connected and the cyclomatic number decreases by 1, and by adding the $x$-edge the cyclomatic number increases by 1.
If the $x_n$-edge does not belong to any simple cycle but still lies on a reduced cycle it means that $C(G)$ is of the form of two components $A$ and $B$, both containing cycles, and a single "bridge" connecting them, with the $x_n$-edge being part of the bridge.
By removing the $x_n$-edge the graph becomes disconnected.
The addition of the $x$-edge may then form again one connected component, so keeping the cyclomatic number unchanged.
Otherwise, suppose that $\alpha(v)$ is on the bridge (possibly a boundary vertex of $A$) and $\alpha(w)$ is in the union of $A$ and the part of the bridge between $A$ and $\alpha(v)$.
Suppose also that the $x$-path that starts at $\alpha(v)$ passes the bridge towards $B$, makes at least one loop in $B$, returns on the bridge towards $A$ and ends at $\alpha(w)$ (with possible more visits to $A$ and $B$ in  between).
In this case the removal of the $x_n$-edge decreases the cyclomatic number of the evolving graph by $b_1(B) \geq 1$, the number of cycles in $B$, and the addition of the $x$-edge increases it by 1.
Clearly, the cyclomatic number of the new graph is at most that of the one in the previous step.
At the worst case it may happen that the component $B$ will join $A$ again by another removal of an $x_n$-edge and addition of an $x$-edge.
This will add $b_1(B) - 1$ to the cyclomatic number, so gaining what was lost before.

Note that the removing of the $x_n$-edge may result in a new hair added to the graph.
So, it may happen that the next time we remove an $x_n$ edge it resides in such a hair.
We claim that in this case the direction of the $x$-path starting at $\alpha(v)$ is towards the base of the hair.
Otherwise, we get two $x$-paths proceeding one towards the other and overlapping in a way that one $x$-path starts with a reduced word $r$ while the other starts with a reduced word $r^{-1}$ - which is impossible.
But when the direction of the $x$-path is towards the base of the hair then both operations of removing an edge as well as adding an edge leave the cyclomatic number unchanged.

Finally, suppose that $v$ is a vertex of $C(K)$ whose image $\alpha(v)$ is in a hair in $\bar{C}(G) - C(G)$ and there is an $x$-edge going out of $v$ in $C(K)$.
Assume there exists an integer $j > 0$ such that $v' = v x^{-j}$ and $\alpha(v') \in C(G)$, that is, $\alpha(v')$ is not inside a hair.
Then we make $j$ steps of removing an $x_n$-edge and adding an $x$-edge, starting at $\alpha(v')$, then at $\alpha(v'x)$ until $\alpha(v'x^j=v)$.
At each such step we remove an $x_n$-edge which is part of a simple cycle (decreasing the cyclomatic number by 1) and add an $x$-edge (thereby increasing again the cyclomatic number), and by doing so, when we reach $\alpha(v)$ it is no longer part of a hair.

When there is no such $j > 0$ for which $v' = v x^{-j}$ with $\alpha(v') \in C(G)$ then necessarily $C(K)$ consists of a single cycle with all its edges labelled $x$.
Then $C(G)$ contains the corresponding cycle with edges labelled $q$ (remember that $x = x_n p q p^{-1} x_n^{-1}$) and the claim of the theorem holds in this case too.
\end{proof}
\section{Echelon subgroups}
\label{sec3}
An echelon subgroup of a free group is the image of a special endomorphism of the free group which in some sense reminds an operator in a vector space represented by a matrix in echelon form .
\begin{defn}[Echelon form]
Let $F_n$ be the free group of rank $n$ with an ordered basis $X = \{ x_1, \ldots , x_n \}$ and let $F_i = \; \langle x_1, \ldots , x_i \rangle$, $i = 0, \ldots, n$, with $F_0 = \; \langle 1 \rangle$.
We say that a subgroup $H < F_n$ is in \emph{echelon form} with respect to $X$ if
$\mathrm{rk}(H \cap F_i) - \mathrm{rk} (H \cap F_{i-1}) \le 1$ for each $i$, $i=1, \ldots ,n$.
\end{defn}
Note that always $\mathrm{rk}(H \cap F_i)$ is greater or equal to $\mathrm{rk} (H \cap F_{i-1})$ since the latter is a free factor of the former.
\begin{remark}
$H < F_n$ is in echelon form with respect to $\{ x_1, \ldots , x_n \}$ if and only if $H$ has a free ordered basis
$\{y_{i_1}, \ldots ,y_{i_r} \}$ with $y_{i_j} \in F_{i_j} - F_{i_j-1}$,
$1 \leq i_1 < i_2 < \cdots < i_r \leq n$ and $F_{i_j} = \; \langle x_1, \ldots , x_{i_j} \rangle$, $j = 1, \ldots, r$.
That is, each $y_{i_j}$ contains at least one new (not present in the previous basis elements) generator of $F_n$ in its reduced form.
\end{remark}
\begin{defn}[Echelon subgroup]
A subgroup $H < F_n$ is an \emph{echelon subgroup} of $F_n$ if $H$ is in echelon form with respect to \emph{some} free ordered basis of $F_n$.
\end{defn}
\begin{remark}
If $F_n$ is a free group of rank $1 < n < \infty$ then an echelon subgroup $H < F_n$ that is of finite index in $F_n$ must be $F_n$ itself by Schreier Index Formula (see e.g. Lyndon and Schupp~\cite{LS77}).
\end{remark}
\begin{lemma}
Every echelon subgroup of $F_n$ is the image of $F_n$ by an endomorphism $\phi$ which is the result of performing
at most $n$ 1-generator subgroup endomorphisms, and for every 1-generator endomorphism of $F_n$ its image is an echelon subgroup of $F_n$.
\label{lem1}
\end{lemma}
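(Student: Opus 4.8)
The plan is to prove the two directions of the equivalence separately, both by induction on $n$ (or on the number of ``new'' generators $r$ in the echelon basis).

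\medskip

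\noindent\textbf{Every echelon subgroup is an image of $F_n$ under $\le n$ 1-generator subgroup endomorphisms.}
Suppose $H < F_n$ is in echelon form with respect to the ordered basis $X = \{x_1,\dots,x_n\}$, and use the Remark that follows the definition of echelon form: $H$ has a free ordered basis $\{y_{i_1},\dots,y_{i_r}\}$ with $y_{i_j} \in F_{i_j} - F_{i_j-1}$ and $1 \le i_1 < \cdots < i_r \le n$. The idea is to build $\phi$ as a composition $\phi = \phi_n \circ \phi_{n-1} \circ \cdots \circ \phi_1$ where each $\phi_k$ is a 1-generator subgroup endomorphism of $F_n$ that either fixes everything (when $k \notin \{i_1,\dots,i_r\}$) or sends $x_k \mapsto y_{i_j}$ (when $k = i_j$) while fixing all the other current basis elements. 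I first need to check that after applying $\phi_1,\dots,\phi_{k}$ the image is a free group with a free basis consisting of the already-substituted words $y_{i_1},\dots,y_{i_{j(k)}}$ together with the untouched $x_\ell$ for $\ell > $ (those indices not yet processed); this is where the echelon condition $y_{i_j} \in F_{i_j} - F_{i_j-1}$ is used, since it guarantees that substituting $x_{i_j}\mapsto y_{i_j}$ keeps the set Nielsen-reduced / a genuine free basis (the new generator $x_{i_j}$ appears, so no collapse occurs). After all $n$ steps the image is exactly $\langle y_{i_1},\dots,y_{i_r}\rangle = H$, and we have performed $n$ 1-generator subgroup endomorphisms (at most $n$ of which are non-trivial).

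\medskip

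\noindent\textbf{The image of any 1-generator endomorphism of $F_n$ is an echelon subgroup.}
Let $\phi: F_n \to F_n$ fix $x_1,\dots,x_{n-1}$ and send $x_n \mapsto x$, with image $H = \langle x_1,\dots,x_{n-1}, x\rangle$. If $x \in F_{n-1}$ then $H = F_{n-1}$, a free factor, hence an echelon subgroup with respect to $X$ itself. If $x \notin F_{n-1}$, then (as noted in the proof of Theorem \ref{th1}) $\{x_1,\dots,x_{n-1},x\}$ is a free basis of $H$, so $\mathrm{rk}(H) = n$. Now $H \cap F_i = F_i$ for $i \le n-1$ (since $F_i \le F_{n-1} \le H$), and $H \cap F_n = H$ has rank $n$; so $\mathrm{rk}(H \cap F_i) - \mathrm{rk}(H \cap F_{i-1})$ equals $1$ for $i \le n-1$ and $n - (n-1) = 1$ for $i = n$. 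Hence $H$ is in echelon form with respect to $X$, i.e.\ an echelon subgroup. (The only mild subtlety is the claim $\mathrm{rk}(H) = n$ when $x \notin F_{n-1}$, which reduces to observing that a reduced word $x$ involving $x_n^{\pm 1}$ cannot be expressed in $F_{n-1}$ together with the other generators without $x_n$ occurring, so no Nielsen cancellation collapses the generating set.)

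\medskip

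\noindent\textbf{Expected main obstacle.} The routine direction is the second one; the delicate point is the first one — verifying that each intermediate substitution $x_{i_j} \mapsto y_{i_j}$ actually preserves freeness of the generating set and that the running image is literally $\langle y_{i_1},\dots,y_{i_{j}}, x_{\ell}: \ell\notin\{i_1,\dots,i_j\}\rangle$. This is exactly where the echelon hypothesis ``each $y_{i_j}$ uses a new generator $x_{i_j}$'' is indispensable, and one should phrase the induction hypothesis carefully so that this Nielsen-reducedness claim is available at each step. I would state the intermediate claim as: after processing the first $k$ indices, $F_n\phi_1\cdots\phi_k$ is free of rank $n$ on the indicated basis, and prove it by noting that the substitution only alters the coordinate $x_{i_j}$ while leaving the occurrence of the free letter $x_{i_j}$ visible in $y_{i_j}$.
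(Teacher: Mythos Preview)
Your second direction (the image of a 1-generator endomorphism is echelon) is correct and is exactly what the paper does. The first direction, however, has two genuine gaps.

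\textbf{The unused generators are never killed.} For $k\notin\{i_1,\dots,i_r\}$ you let $\phi_k$ be the identity. Then $x_k$ survives every step, and after all $n$ maps the image is $\langle y_{i_1},\dots,y_{i_r}\rangle$ together with every $x_k$ with $k\notin\{i_1,\dots,i_r\}$, which is strictly larger than $H$ whenever $r<n$. (Concretely: $n=2$, $H=\langle x_1\rangle$; your $\phi_1$ and $\phi_2$ are both the identity and the image is $F_2$, not $H$.) Your own description of the running image as $\langle y_{i_1},\dots,y_{i_{j}},\,x_\ell:\ell\notin\{i_1,\dots,i_j\}\rangle$ already shows this. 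The paper instead sends such $x_k$ to the identity element.

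\textbf{The bottom-up order breaks the ``subgroup endomorphism'' requirement.} Even after repairing the previous point, processing $x_1$ first and $x_n$ last does not produce a chain of 1-generator \emph{subgroup} endomorphisms $\phi_k:H_{k-1}\to H_{k-1}$ as the lemma requires. After replacing $x_{i_1}$ by $y_{i_1}$, the next target $y_{i_2}\in F_{i_2}$ need not lie in the current image at all: with $y_1=x_1^{2}$ and $y_2=x_1x_2$ (an echelon basis of a subgroup of $F_2$), your $\phi_1$ has image $\langle x_1^{2},x_2\rangle$, which does not contain $x_1x_2$, so no endomorphism of that subgroup can send $x_2$ to $y_2$. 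The paper processes \emph{top-down}: it handles $x_n$ first, then $x_{n-1}$, and so on. At step $k$ the running subgroup $H_{k-1}$ still contains $x_1,\dots,x_{n-k+1}$, hence contains $F_{n-k+1}$, and the next target $y_{i_j}$ (with $i_j=n-k+1$) lies in $F_{n-k+1}\subseteq H_{k-1}$ automatically. With this order the intermediate basis $\{x_1,\dots,x_{n-k}\}\cup\{y_{i_j}:i_j\ge n-k+1\}$ is immediate, and the Nielsen-reducedness verification you flag as the ``main obstacle'' disappears.
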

\begin{proof}
Let $H < F_n$ be an echelon subgroup of $F_n$ with respect to the ordered basis $\{ x_1, \ldots , x_n \}$ of $F_n$.
Then there exists an ordered basis $\{y_{i_1}, \ldots ,y_{i_r} \}$ of $H$, with $y_{i_j} \in F_{i_j} - F_{i_j-1}$, 
$1 \leq i_1 < i_2 < \cdots < i_r \leq n$, and $F_{i_j} = \; \langle x_1, \ldots , x_{i_j} \rangle$, $j = 1, \ldots, r$.

We form a series of subgroups $H_k$, $k = 0, \ldots, n$ through 1-generator subgroup endomorphisms $\phi_k$.
We start with $H_0 = F_n$ and obtain the subgroup $H_1 = H_0 \phi_1$ in the following way.
If $i_r = n$ then we map $x_n$ to $y_{i_r}$, while fixing the other generators.
The image is the subgroup $H_1 = \; \langle x_1, \ldots , x_{n-1}, y_{i_r} \rangle$.
If $i_r \neq n$ then $x_n$ is mapped to the trivial group element and then $H_1 = \; \langle x_1, \ldots, x_{n-1} \rangle$. 

Continuing in the same manner, at step $k$ we form the subgroup $H_k = H_{k-1} \phi_k$, where $\phi_k$ is
the 1-generator endomorphism of $H_{k-1}$ which fixes all generators of $H_{k-1}$ different from $x_{n+1-k}$,
and
$$
x_{n+1-k} \phi_k = 
\left\{
\begin{array}{ll}
y_{n+1-k} & \mbox{if $n+1-k \in \{i_1, \ldots , i_r \}$} \\
1 & \mbox{otherwise} \\
\end{array}
\right.
$$
$H_k < H_{k-1}$ is freely generated by the elements $x_1, \ldots, x_{n-k}$ and the elements $y_{i_j}$
for which $i_j \geq n-k+1$. 
Finally, at step $n$ the free ordered basis $\{y_{i_1}, \ldots ,y_{i_r} \}$ of the subgroup $H_n = H$ is constructed.
If we skip the endomorphisms $\phi_k$ which are the identity then the number of steps may be less than $n$.

For the second part of the claim, let $\phi : F_n \to F_n$ be a 1-generator endomorphism and let $H = F_n \phi$.
By possibly renaming the ordered set of generators of $F_n$, we may assume that $\phi$ is defined by $x_i \phi = x_i$, $i = 1, \ldots, {n-1}$, $x_n \phi = x \in F_n$.
Then either $x \in F_{n-1}$ and $H = F_{n-1}$, or else $H = \langle x_1, \ldots , x_{n-1}, x \rangle$.
In both cases $H$ is an echelon subgroup of $F_n$.
\end{proof}
\begin{theorem}
Echelon subgroups of free groups are inert.
\label{th2}
\end{theorem}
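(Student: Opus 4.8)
The plan is to derive Theorem~\ref{th2} from Theorem~\ref{th1} using the decomposition of Lemma~\ref{lem1} and the transitivity of inertia. First I would record the only bit of bookkeeping needed: if $H < G < F$ are free groups with $G$ inert in $F$ and $H$ inert in $G$, then $H$ is inert in $F$ (this is the transitivity remark in the introduction), and by an immediate induction this extends to any finite chain $H = H_m < H_{m-1} < \cdots < H_1 < H_0 = F$ in which each $H_k$ is inert in $H_{k-1}$: indeed $H_m$ inert in $H_{m-1}$ and $H_{m-1}$ inert in $H_{m-2}$ give $H_m$ inert in $H_{m-2}$, and so on down to $H_0$.

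Now let $H < F_n$ be an echelon subgroup. By Lemma~\ref{lem1} there is a chain $F_n = H_0 > H_1 > \cdots > H_m = H$ with $m \le n$, in which each $H_k = H_{k-1}\phi_k$ is the image of a $1$-generator subgroup endomorphism $\phi_k$ of $H_{k-1}$. Each $H_{k-1}$, being a subgroup of a free group, is itself a (finitely generated, of rank $\le n$) free group by the Nielsen--Schreier theorem, and $\phi_k$ is by definition a $1$-generator endomorphism of the free group $H_{k-1}$. Hence Theorem~\ref{th1}, applied with $H_{k-1}$ in the role of $F_n$, shows that $H_k = H_{k-1}\phi_k$ is inert in $H_{k-1}$; equivalently, each $\phi_k$ is an inert endomorphism.

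Feeding these facts into the transitivity statement from the first paragraph yields that $H = H_m$ is inert in $H_0 = F_n$, which is precisely Theorem~\ref{th2}. Conversely, the second half of Lemma~\ref{lem1} says that the image of any $1$-generator endomorphism of $F_n$ is an echelon subgroup of $F_n$, so Theorem~\ref{th2} in turn implies Theorem~\ref{th1}; thus the two statements are equivalent, as claimed in the introduction.

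Since all the substantive combinatorial work is already carried out in the proof of Theorem~\ref{th1}, there is essentially no obstacle in this argument. The single point that merits a line of care is ensuring that every intermediate group $H_{k-1}$ in the Lemma~\ref{lem1} chain really is a free group, so that Theorem~\ref{th1} is legitimately applicable at each stage — and this is immediate from Nielsen--Schreier. One should perhaps also note, for completeness, that in checking inertia it suffices to consider finitely generated test subgroups $G$ (as is done in the proof of Theorem~\ref{th1}), since when $\mathrm{rk}(G) = \infty$ the required inequality is vacuous.
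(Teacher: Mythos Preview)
Your proof is correct and follows exactly the paper's own argument: invoke Lemma~\ref{lem1} to obtain the chain of 1-generator subgroup endomorphisms, apply Theorem~\ref{th1} at each step, and conclude by transitivity of inertia (with the same remark afterward that the second half of Lemma~\ref{lem1} gives the converse, making Theorems~\ref{th1} and~\ref{th2} equivalent). The only differences are cosmetic: you spell out the inductive transitivity and the Nielsen--Schreier justification for the intermediate groups being free, which the paper leaves implicit.
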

\begin{proof}
By Lemma~\ref{lem1} an echelon subgroup $H < F_n$ can be reached through a series of 1-generator subgroup endomorphisms.
Surely, $F_n$ is inert in itself, and by Theorem~\ref{th1} and the transitivity of the inertia property $H$ is inert in $F_n$.
\end{proof}
On the other hand, by the second part of Lemma~\ref{lem1}, the inertia property of echelon subgroups implies that 1-generator endomorphisms are inert.
So, in fact, Theorem~\ref{th2} is equivalent to Theorem~\ref{th1}.
\begin{example}
In this example we construct a non-echelon subgroup $H$ of $F_3$ through a series of 1-generator subgroup endomorphisms.
That is, in general, if $G$ is an echelon subgroup of $F_n$ and $H$ is an echelon subgroup of $G$ then $H$ is not
necessarily echelon in $F_n$.
So let $F_3$ be the free group with ordered basis $\{ x, y, z \}$.
We apply the 1-generator endomorphism defined by $x \mapsto u = x^2 y^2 x^2$ to form the echelon subgroup $G = \langle u=x^2 y^2 x^2, y, z \rangle$.
Then we perform the 1-generator endomorphism of $G$ given by $y \mapsto v = y^2 z^2 y^2$ to obtain the subgroup $K = \langle u=x^2 y^2 x^2, v = y^2 z^2 y^2, z \rangle$.
Finally, with $z \mapsto w = z^2 u z^2$ we obtain the subgroup $H = \langle u=x^2 y^2 x^2, v = y^2 z^2 y^2, w = z^2 x^2 y^2 x^2 z^2 \rangle$.
$H$ is not echelon: since it is of the same rank as $F_3$, in order to be echelon it must contain a positive power of a primitive element.
But if $g^i \in H$ then $g \in \langle x^2, y^2, z^2 \rangle$ and such elements are known to be non-primitive. 
Since $H$ was constructed by 1-generator subgroup endomorphisms then it is, however, inert in $F_n$. 
\label{ex1}
\end{example}
The notion of compression seems to be weaker than the notion of inertia.
When $H$ is inert then the rank of the intersection of $H$ with every subgroup $G < F$ is at most the rank of $G$, while when $H$ is compressed the same property is confined to subgroups $G$ containing $H$.
It is, however, conjectured that the two notions coincide (see~\cite{V02} for a discussion on this conjecture).
Typical examples of compressed subgroups are retracts.
Since echelon subgroups are inert then we know they are compressed.
A direct proof of the compression property for echelon subgroups is, however, simple so we bring it below.  
\begin{proposition}
Echelon subgroups of free groups are compressed.
\label{pr1}
\end{proposition}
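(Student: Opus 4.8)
The plan is to unwind the definition of echelon form directly, without appealing to inertia. Fix an ordered basis $X=\{x_1,\dots,x_n\}$ of $F_n$ with respect to which $H$ is in echelon form, set $F_i=\langle x_1,\dots,x_i\rangle$, and let $\{y_{i_1},\dots,y_{i_r}\}$ be a free ordered basis of $H$ with $y_{i_j}\in F_{i_j}-F_{i_j-1}$ and $1\le i_1<\dots<i_r\le n$, as in the Remark following the definition of echelon form; in particular $\mathrm{rk}(H)=r\le n$. Let $G$ be any subgroup with $H<G<F_n$. If $G$ is not finitely generated then $\mathrm{rk}(H)\le\mathrm{rk}(G)$ trivially, so we may assume $\mathrm{rk}(G)<\infty$.

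Next I would set up the filtration of $G$ induced by the $F_i$. Since $F_{i-1}$ is a free factor of $F_i$ and $G\cap F_i\le F_i$, the subgroup $G\cap F_{i-1}=(G\cap F_i)\cap F_{i-1}$ is a free factor of $G\cap F_i$ --- the same fact already invoked in the paper, obtainable e.g. from the Kurosh subgroup theorem with the trivial double-coset representative. By transitivity of the free-factor relation along the chain $G\cap F_0\le G\cap F_1\le\cdots\le G\cap F_n=G$, every $G\cap F_i$ is a free factor of $G$, hence finitely generated, and
\[
\mathrm{rk}(G)=\sum_{i=1}^{n}\big(\mathrm{rk}(G\cap F_i)-\mathrm{rk}(G\cap F_{i-1})\big),
\]
a sum of non-negative integers.

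The crux is a lower bound on the $r$ summands indexed by $i_1,\dots,i_r$. For each $j$ we have $y_{i_j}\in H\cap F_{i_j}\subseteq G\cap F_{i_j}$ while $y_{i_j}\notin F_{i_j-1}$, so $y_{i_j}\in (G\cap F_{i_j})\setminus(G\cap F_{i_j-1})$ and hence $G\cap F_{i_j-1}$ is a \emph{proper} free factor of $G\cap F_{i_j}$. A proper free factor of a finitely generated free group has strictly smaller rank, so each of these $r$ summands is at least $1$, giving $\mathrm{rk}(G)\ge r=\mathrm{rk}(H)$, which is exactly the compression property. I do not anticipate any real obstacle here: beyond the bookkeeping, the only ingredients are the two classical facts that the intersection of a subgroup with a free factor is a free factor of the corresponding intersection, and that a proper free factor of a finitely generated free group drops the rank --- which is precisely why, unlike inertia, the compression of echelon subgroups admits such a short direct argument.
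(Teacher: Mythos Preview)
Your proof is correct and follows essentially the same approach as the paper's: both use the filtration $G\cap F_i$, the fact that each $G\cap F_{i-1}$ is a free factor of $G\cap F_i$, and the observation that each echelon generator $y_{i_j}$ witnesses a strict rank jump at index $i_j$. The only cosmetic difference is that the paper packages this as an induction showing $\mathrm{rk}(G\cap F_i)\ge\mathrm{rk}(H\cap F_i)$ for all $i$, whereas you write the rank of $G$ as a telescoping sum and bound $r$ of its terms from below; these are the same argument.
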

\begin{proof}
Let $H$ be an echelon subgroup of $F_n$ with respect to the ordered basis $X = \{ x_1, \ldots , x_n \}$ of $F_n$,
and let $F_i = \; \langle x_1, \ldots , x_i \rangle$, $i = 0, \ldots, n$, with $F_0 = \; \langle 1 \rangle$.
Let $H < G < F_n$. We need to show that $\mathrm{rk}(G) \geq \mathrm{rk}(H)$.
For each $i$, let $H_i = H \cap F_i$ and let $G_i = G \cap F_i$.
Then $H_i$ (respectively $G_i$) is a free factor of $H$ (respectively $G$) and also a free factor
of $H_{i+1}$ (respectively $G_{i+1}$).

We claim that $\mathrm{rk}(G_i) \geq \mathrm{rk}(H_i)$ for each $i$.
It certainly holds for $i = 0$.
Suppose by induction the claim is true for $i = k$.
By the very definition of echelon subgroups, $s_{k+1} = \mathrm{rk}(H_{k+1}) - \mathrm{rk} (H_k) \le 1$.
If $s_{k+1} = 0$ then of course $\mathrm{rk}(G_{k+1}) \geq \mathrm{rk}(H_{k+1})$.
Otherwise, $\mathrm{rk}(H_{k+1}) = \mathrm{rk} (H_k) +1$, and let $h \in H_{k+1} - H_k$.
Since $G_k$ is a free factor of $G_{k+1}$ then $\mathrm{rk}(G_{k+1}) \geq \mathrm{rk} (G_k)$.
Moreover, $h \in G_{k+1}$ since $H_{k+1} < G_{k+1}$, and $h \notin G_k$ since $h \in F_{k+1} - F_k$.
We conclude that $\mathrm{rk}(G_{k+1}) > \mathrm{rk} (G_k)$, and by the induction hypothesis
$\mathrm{rk}(G_{k+1}) \geq \mathrm{rk}(H_{k+1})$.
In particular, for $k = n$ we get that $\mathrm{rk}(G) = \mathrm{rk}(G_n) \geq \mathrm{rk}(H_n) = \mathrm{rk}(H)$.
\end{proof}
\section{Example: subgroups fixed by automorphisms}
\label{sec4}
Given an automorphism $\varphi : F_n \to F_n$, a much studied object is $Fix(\varphi) < F_n$, the subgroup
consisting of the group elements fixed by $\varphi$.
Scott conjectured that $Fix(\varphi)$ is finitely-generated and, moreover, is of rank bounded by $n$.
His conjecture was proven to be true:
\begin{theorem} [Gersten \cite{G83}]
If $\varphi : F_n \to F_n$ is an automorphism then the rank of $Fix(\varphi)$ is finite.
\label{th3}
\end{theorem}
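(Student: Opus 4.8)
Since the statement is Gersten's theorem, one option is simply to cite \cite{G83}; here is the line I would follow to prove it directly, in the sharp form $\mathrm{rk}(Fix(\varphi)) \le n$ due to Bestvina and Handel. First realize $\varphi$ by a cellular homotopy equivalence $f \colon \Gamma \to \Gamma$ of a finite connected graph with $\pi_1(\Gamma) \cong F_n$, fixing a base vertex $*$, and pass to the universal cover $T$, a tree on which $F_n$ acts freely. Choosing the lift $\tilde f \colon T \to T$ with $\tilde f(\tilde *) = \tilde *$ gives the equivariance $\tilde f(g \cdot x) = \varphi(g) \cdot \tilde f(x)$ for all $g \in F_n$ and $x \in T$; hence $g \in Fix(\varphi)$ precisely when the deck transformation $g$ commutes with $\tilde f$. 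Such $g$ preserve the $\tilde f$-invariant set $Fix(\tilde f) \subseteq T$ and act freely on it, so the problem reduces to exhibiting a $Fix(\varphi)$-invariant subtree $Y \subseteq T$ on which $Fix(\varphi)$ acts cocompactly: then $Y / Fix(\varphi)$ is a finite graph with fundamental group $Fix(\varphi)$.

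The obstruction to doing this naively is that $\tilde f$ also has fixed points on the boundary $\partial T = \partial F_n$: an automorphism can have infinitely many non-periodic fixed ends, and one must rule out that these force infinitely many generators. To control them I would first replace $f$ by a (relative) train track representative in the sense of Bestvina and Handel, which gives bounded cancellation and prevents backtracking along $f$-images of legal edge-paths. With this in hand one shows: (i) the vertices of $T$ fixed by $\tilde f$ fall into finitely many $Fix(\varphi)$-orbits, bounded in terms of the number of edges of $\Gamma$; and (ii) every attracting fixed end of $\tilde f$ is the limit of an eigenray emanating from a fixed vertex in a legal direction, and there is at most one such eigenray per fixed direction, hence finitely many per orbit. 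Taking $Y$ to be the convex hull in $T$ of the fixed vertices together with these eigenrays then yields a subtree with cocompact $Fix(\varphi)$-action, so $Fix(\varphi)$ is finitely generated; a bookkeeping of the contributions of fixed vertices, fixed directions and invariant rays upgrades this to the estimate $\mathrm{rk}(Fix(\varphi)) \le n$.

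The main obstacle is step (ii): taming the fixed ends at infinity is exactly what fails for an arbitrary topological representative and exactly what the train track structure is built to fix, since it forces a legal ray that $\tilde f$ pushes forward to keep expanding without cancellation, so that fixed ends cannot accumulate uncontrollably and each is already visible at a fixed vertex. Gersten's original argument in \cite{G83} is different and more combinatorial --- it establishes finiteness directly rather than through train tracks --- and the quantitative bound came only later with the train track machinery.
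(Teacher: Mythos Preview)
The paper does not prove this theorem at all: it is stated as background and attributed to Gersten~\cite{G83}, with the sharper bound recorded separately as Theorem~\ref{th4} (Bestvina--Handel). Your opening sentence --- simply cite~\cite{G83} --- is therefore exactly what the paper does, and nothing more is required here.

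Your additional sketch goes well beyond the paper. The strategy you outline (realize $\varphi$ by a self-map of a graph, lift to the universal cover, identify $Fix(\varphi)$ with deck transformations commuting with the lift, and then use a train-track representative to control fixed vertices and attracting eigenrays so as to produce a cocompact invariant subtree) is essentially the approach of Cooper and of Gaboriau--Jaeger--Levitt--Lustig building on Bestvina--Handel, and is sound at the level of a plan. The honest caveat is that your step~(ii) hides almost all of the work: showing that every fixed end is the limit of a legal eigenray from a fixed vertex, and that distinct eigenrays correspond to distinct fixed directions, requires the full bounded-cancellation/train-track machinery and a careful case analysis of strata, so as written this is an outline rather than a proof. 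But since the paper itself only cites the result, there is no discrepancy to flag.
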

\begin{theorem} [Bestvina-Handel \cite{BH92}]
If $\varphi : F_n \to F_n$ is an automorphism then the rank of $Fix(\varphi)$ is at most $n$.
\label{th4}
\end{theorem}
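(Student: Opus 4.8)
The plan is to derive the bound $\mathrm{rk}(Fix(\varphi))\le n$ from the inertia of echelon subgroups developed in Sections~\ref{sec2}--\ref{sec3}, rather than to reprove it by train-track dynamics. First I would invoke Gersten's Theorem~\ref{th3}, so that $Fix(\varphi)$ is known to be finitely generated and all the ranks below are finite. The core step is to exhibit $Fix(\varphi)$ as an \emph{echelon subgroup} of $F_n$: here one feeds in the structure theorem of Martino and Ventura~\cite{MV04b}, which yields an ordered basis $\{x_1,\dots,x_n\}$ of $F_n$ together with a basis $\{y_{i_1},\dots,y_{i_r}\}$ of $Fix(\varphi)$ such that $y_{i_j}\in F_{i_j}-F_{i_j-1}$ with $1\le i_1<\cdots<i_r\le n$, where $F_i=\langle x_1,\dots,x_i\rangle$. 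By the characterization of echelon form recalled in Section~\ref{sec3}, this is exactly the statement that $Fix(\varphi)$ is in echelon form with respect to $\{x_1,\dots,x_n\}$, hence is an echelon subgroup of $F_n$.

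Granting that, the bound follows in two equivalent ways. Directly, summing the defining inequalities $\mathrm{rk}(Fix(\varphi)\cap F_i)-\mathrm{rk}(Fix(\varphi)\cap F_{i-1})\le 1$ over $i=1,\dots,n$ telescopes, and since $Fix(\varphi)\cap F_0$ is trivial we obtain $\mathrm{rk}(Fix(\varphi))=\mathrm{rk}(Fix(\varphi)\cap F_n)\le n$. Alternatively, Theorem~\ref{th2} says an echelon subgroup is inert in $F_n$, so specializing the inertia inequality to $G=F_n$ gives $\mathrm{rk}(Fix(\varphi))=\mathrm{rk}(Fix(\varphi)\cap F_n)\le\mathrm{rk}(F_n)=n$; this second route additionally recovers the Dicks--Ventura inertia statement for $Fix(\varphi)$ as a byproduct.

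The hard part is entirely the reduction to echelon form, i.e.\ the input from~\cite{MV04b}: producing an honest ordered basis of $Fix(\varphi)$ adapted to a filtration of $F_n$ by free factors is where the genuine combinatorial and dynamical work (relative train-track maps, periodic and fixed Nielsen paths, Euler-characteristic bookkeeping) lives, and I would not attempt to redo it here. By contrast, matching the Martino--Ventura output to the characterization of echelon form, and the telescoping (or inertia) deduction, are routine. I would also flag that the argument is not logically independent of Bestvina--Handel, since~\cite{MV04b} itself builds on their techniques; its value is conceptual, placing both the rank bound and the inertia of $Fix(\varphi)$ under the common roof of echelon subgroups.
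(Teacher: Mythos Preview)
The paper does not prove Theorem~\ref{th4} at all: it is quoted as the Bestvina--Handel result and serves only as background for Section~\ref{sec4}. So there is no ``paper's own proof'' to compare against. Your derivation is nonetheless exactly the route the paper takes two theorems later for Theorem~\ref{th7} (Dicks--Ventura inertia): read off from the Martino--Ventura description (Theorem~\ref{th5}) that $Fix(\varphi)$ is an echelon subgroup, then apply Theorem~\ref{th2}. The telescoping variant you give is also fine and is essentially the compressedness argument of Proposition~\ref{pr1} specialised to $G=F_n$.

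One comment on the matching step: the Martino--Ventura generators $y_j$ are only asserted to lie in $\langle x_{i_{j-1}+1},\dots,x_{i_j}\rangle$, not in $F_{i_j}-F_{i_j-1}$, so a priori $y_j$ might not involve $x_{i_j}$. This is harmless, since the blocks are disjoint and one can replace each $i_j$ by the actual top index of $y_j$ while keeping the sequence strictly increasing; the $z_k$ genuinely involve $x_{i_r+k}$ because $w_k\in F_{i_r+k-1}$. With that adjustment the Remark's characterisation of echelon form applies verbatim.

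You are right to flag, and it is worth stating more bluntly, that as a proof of Theorem~\ref{th4} this is circular: Martino and Ventura's structural description in \cite{MV04b} is obtained via Bestvina--Handel relative train tracks and presupposes the rank bound (indeed, the very existence of a basis of $Fix(\varphi)$ of size at most $n$ adapted to a filtration of $F_n$ already encodes Theorem~\ref{th4}). So your argument should be read as the paper reads it: a repackaging that places the Bestvina--Handel bound and the Dicks--Ventura inertia under the single heading of ``echelon subgroups are inert'', not as an independent proof.
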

Martino and Ventura gave a description of the structure of $Fix(\varphi)$, which can be stated as follows. 
\begin{theorem} [Martino-Ventura \cite{MV04b}]
For every automorphism $\varphi : F_n \to F_n$ there exists an ordered basis $\{ x_1, \ldots , x_n \}$ of $F_n$ such that
$$
Fix(\varphi) =  \; \langle y_1, \ldots , y_r, z_1, \ldots, z_s  \rangle,
$$
where the $y_j$ are not proper powers and
$$y_j \in \langle x_{i_{j-1}+1}, \ldots, x_{i_j} \rangle, \ i_{j-1} < i_j, \ 
j = 1, \ldots, r, \ r \geq 0, \ i_0 = 0,
$$
$$z_k = x_{i_r+k}^{-1} w_k x_{i_r+k}, \ w_k \in F_{i_r+k-1}, \ k = 1, \ldots, s, \ s \geq 0, \ 
i_r + s \leq n.
$$
\label{th5}
\end{theorem}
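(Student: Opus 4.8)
The plan is to recover the stated normal form from the Bestvina--Handel theory of train tracks, refined by relative/improved train track maps, following essentially the route of Martino and Ventura. First I would fix a good topological representative of $\varphi$: a homotopy equivalence $f\colon\Gamma\to\Gamma$ of a finite graph with $\pi_1(\Gamma)=F_n$, no valence-$1$ or valence-$2$ vertices, chosen to be a relative train track map (to be safe, a stable/improved one). Such a map comes with an $f$-invariant filtration $\emptyset=\Gamma_0\subset\Gamma_1\subset\dots\subset\Gamma_N=\Gamma$ whose strata $H_i=\overline{\Gamma_i\setminus\Gamma_{i-1}}$ are of one of three types: zero strata, exponentially-growing strata, and non-exponentially-growing (NEG) strata; after the usual subdivisions each NEG stratum is a single edge $e_i$ with $f(e_i)=e_i\cdot u_i$ for a closed path $u_i$ in $\Gamma_{i-1}$.

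The second step is to read $Fix(\varphi)$ off this structure. By Theorem~\ref{th3} and Theorem~\ref{th4} the group is free of rank $\le n$, so it is enough to produce a basis of the asserted shape; after the standard reduction that $Fix(\varphi)$ splits as a finite family of subgroups indexed by the $f$-fixed vertices, one may work with loops fixed at a chosen basepoint. A fixed loop either lies (after being pushed down as far as possible) in a union of lower strata, where it contributes a generator $y_j$ supported on a block $\langle x_{i_{j-1}+1},\dots,x_{i_j}\rangle$ of consecutive generators, and stripping proper powers makes it not a proper power; or it genuinely crosses a top NEG edge $e_i$, in which case the train track / bounded cancellation analysis forces it into the form $e_i^{-1}\,w\,e_i$ with $w$ a fixed path in the lower stratum, i.e. a generator $z_k=x_{i_r+k}^{-1}w_kx_{i_r+k}$ with $w_k\in F_{i_r+k-1}$. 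Exponentially-growing strata must be shown to contribute nothing new to the fixed subgroup beyond what already lies below them (an attracting lamination has no fixed leaf closing up to a fresh generator), which is precisely what confines the ``exponential part'' of $Fix(\varphi)$ to the $y_j$-blocks.

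The third step is to convert the filtration of $\Gamma$ into the ordered basis $X=\{x_1,\dots,x_n\}$ of $F_n$: choose, bottom-up and stratum by stratum, free generators of $\pi_1(\Gamma_i)$ extending those of $\pi_1(\Gamma_{i-1})$, so that $F_i=\langle x_1,\dots,x_i\rangle$ equals $\pi_1$ of the appropriate $\Gamma_{m(i)}$. With such a basis the blocks $\langle x_{i_{j-1}+1},\dots,x_{i_j}\rangle$ carrying the $y_j$ are exactly the invariant free factors supported on the lower strata, and the NEG edges, ordered after all of these, supply the $z_k$ with the required nesting $w_k\in F_{i_r+k-1}$ and $i_r+s\le n$.

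The main obstacle I expect is controlling cancellation. Verifying that a fixed loop crossing a NEG edge really has the clean conjugate form $e^{-1}we$, and that iterating $f$ does not manufacture ever-longer fixed segments, requires the Bounded Cancellation Lemma together with the analysis of the dynamics of $\varphi$ on $\partial F_n$ (finitely many attracting/repelling points, no other fixed behaviour of the relevant kind); making these estimates uniform across the whole filtration, and simultaneously over all the finitely many basepoint choices, is the delicate part. A secondary difficulty is arranging a \emph{single} ordered basis of $F_n$ that serves all strata at once rather than one basis per stratum, which is where the inductive extension of partial bases and the non-interleaving of the $y_j$-blocks with the $z_k$-conjugators have to be checked carefully.
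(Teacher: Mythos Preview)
The paper does not prove Theorem~\ref{th5} at all: it is stated as a result of Martino and Ventura and simply cited from \cite{MV04b}, with no argument given. So there is no ``paper's own proof'' to compare your proposal against; the theorem is quoted solely in order to observe that $Fix(\varphi)$ is visibly an echelon subgroup, whence inertia follows from Theorem~\ref{th2}.

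Your outline is a reasonable high-level sketch of the actual Martino--Ventura argument (improved relative train tracks, stratum-by-stratum analysis of fixed loops, NEG edges yielding the conjugator generators $z_k$, exponential strata contributing only the $y_j$-blocks, and a compatible ordered basis built up along the filtration). The obstacles you flag---bounded cancellation, handling multiple basepoints, and arranging a single global ordered basis---are indeed where the real work lies, and a full proof would have to carry them out rather than merely name them. But for the purposes of the present paper none of this is needed: Theorem~\ref{th5} is treated as a black box.
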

It is clear from the structure of $Fix(\varphi)$ that these subgroups form a special kind of echelon subgroups of $F_n$, hence by Theorem~\ref{th2} they are inert.
This is a known fact, already proved by Dicks and Ventura in 1996 (by other methods, of course).
\begin{theorem} [Dicks-Ventura \cite{DV96}]
The fixed subgroup $Fix(\varphi)$ of any automorphism $\varphi$ of $F_n$ is inert. 
\label{th7}
\end{theorem}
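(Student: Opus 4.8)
The plan is to obtain Theorem~\ref{th7} as a direct consequence of the Martino--Ventura structure theorem (Theorem~\ref{th5}) together with Theorem~\ref{th2}. Given an automorphism $\varphi : F_n \to F_n$, I would first apply Theorem~\ref{th5} to fix an ordered basis $\{x_1, \ldots, x_n\}$ of $F_n$ for which
\[
Fix(\varphi) = \langle y_1, \ldots, y_r, z_1, \ldots, z_s \rangle
\]
is a free basis of the prescribed form: $y_j \in \langle x_{i_{j-1}+1}, \ldots, x_{i_j}\rangle$ with $0 = i_0 < i_1 < \cdots < i_r$, each $y_j$ not a proper power, and $z_k = x_{i_r+k}^{-1} w_k x_{i_r+k}$ with $w_k \in F_{i_r+k-1}$ and $i_r + s \le n$ (the bound $r + s \le i_r + s \le n$ being consistent with Theorem~\ref{th4}).

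Next I would verify that this free basis is in echelon form with respect to $\{x_1, \ldots, x_n\}$, using the characterization in the Remark following the definition of echelon form, which only requires \emph{some} suitable basis. For each $j$ let $m_j$ be the largest index of a free generator occurring in the reduced word of $y_j$; since $y_j$ is a nontrivial element of $\langle x_{i_{j-1}+1}, \ldots, x_{i_j}\rangle$ we get $i_{j-1} < m_j \le i_j$, hence $y_j \in F_{m_j} - F_{m_j-1}$. As the blocks $\{x_{i_{j-1}+1}, \ldots, x_{i_j}\}$ are disjoint and consecutive, $m_j \le i_j < m_{j+1}$, so $m_1 < m_2 < \cdots < m_r \le i_r$. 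For the conjugates, the word $x_{i_r+k}^{-1} w_k x_{i_r+k}$ is already reduced (no $x_{i_r+k}$ occurs in $w_k$, and $w_k \ne 1$), so $z_k \in F_{i_r+k} - F_{i_r+k-1}$, and $i_r < i_r+1 < \cdots < i_r+s \le n$. Thus the basis $\{y_1, \ldots, y_r, z_1, \ldots, z_s\}$, together with the strictly increasing list of indices $m_1 < \cdots < m_r < i_r+1 < \cdots < i_r+s$, exhibits $Fix(\varphi)$ as a subgroup in echelon form, hence as an echelon subgroup of $F_n$. Theorem~\ref{th2} then yields that $Fix(\varphi)$ is inert. (When $r = s = 0$ the group is trivial and the statement is vacuous.)

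The only points requiring care are bookkeeping ones: that the generating set supplied by Theorem~\ref{th5} is genuinely a free basis (part of that theorem's content), and that assigning to each $y_j$ the \emph{top} index $m_j$ actually occurring in it --- rather than the block endpoint $i_j$ --- still gives strictly increasing indices, which holds precisely because the blocks do not overlap. I do not anticipate a real obstacle: the substantive work is entirely in Theorem~\ref{th5} (the structural description of fixed subgroups) and in Theorem~\ref{th1}/Theorem~\ref{th2} (inertia of $1$-generator endomorphisms), so what remains is just reading off that the Martino--Ventura normal form is an echelon form. One could alternatively re-derive the same conclusion by running the iterative $1$-generator endomorphism construction of Lemma~\ref{lem1} directly on this normal form, but routing through the echelon notion is cleaner.
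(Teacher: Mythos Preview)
Your proposal is correct and follows exactly the route the paper takes: in the paper the entire argument is the one-line observation, just before the theorem, that the Martino--Ventura normal form of Theorem~\ref{th5} visibly exhibits $Fix(\varphi)$ as an echelon subgroup, whence Theorem~\ref{th2} applies. You have simply unpacked the ``it is clear'' step with the explicit index bookkeeping (choosing the top index $m_j$ for each $y_j$ and checking strict monotonicity across the blocks and the $z_k$), which is fine and accurate.
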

\section{Open problems}
\label{sec5}
We present here problems for further research.
\begin{itemize}
\item  Let $F$ be a free group defined by a set of free generators and let $H$ be a subgroup of $F$ defined by a set of finitely-many generators expressed in terms of the generators of $F$.
Is it algorithmically decidable whether $H$ is an echelon subgroup of $F$?
\item  Let $H$ be an echelon subgroup of a free group $F$ and let $G$ be a subgroup of $F$.
Is $K = H \cap G$ an echelon subgroup of $G$?
\item  Let $H, G$ be echelon subgroups of a free group $F$.
Is $K = H \cap G$ an echelon subgroup of $F$?
One may want to look first at the case where $H$ and $G$ are in echelon form with respect to the same ordered basis of $F$.
\end{itemize}

\end{document}